\theoremstyle{thmstyleone}%
\newtheorem{theorem}{Theorem}[section]
\newtheorem{proposition}[theorem]{Proposition}%
\newtheorem{corollary}[theorem]{Corollary}
\theoremstyle{thmstyletwo}%
\newtheorem{remark}[theorem]{Remark}%
\theoremstyle{thmstylethree}%
\newtheorem{definition}[theorem]{Definition}%
\def\r{\mathbb R}
\def\s{\mathbb S}
\begin{document}

\title[A connection between minimal surfaces and a problem of Euler]{A connection between minimal surfaces and the two-dimensional analogues of a problem of Euler}


\author{\fnm{Rafael} \sur{L\'opez}}\email{rcamino@ugr.es}

\affil{\orgdiv{Department of Geometry and Topology}, \orgname{University of Granada}, \orgaddress{ \city{Granada}, \postcode{18071},  \country{Spain}}}


\abstract{If $\alpha\in\r$, an $\alpha$-stationary surface in Euclidean space is a surface $\Sigma$ whose mean curvature $H$ satisfies $H(p)=\alpha |p|^{-2} \langle\nu,p\rangle$, $p\in\Sigma$. These surfaces generalize in dimension two a classical family of curves studied by Euler which are critical points of the moment of inertia of planar curves. In this paper we establish, via inversions, a one-to-one correspondence between $\alpha$-stationary surfaces and $-(\alpha+4)$-stationary surfaces. In particular, there is a correspondence between $-4$-stationary surfaces and minimal surfaces. Using this duality we give some results of uniqueness of $-4$-stationary surfaces and we solve the B\"{o}rling problem. }

\keywords{Euler's problem, minimal surfaces, inversions, tangency principle}


\pacs[MSC Classification]{53A10, 49Q05, 35A15}

\maketitle

\section{Introduction and statement of the results}

In its magna opus,  Euler studied the minimization of the moment of inertia of planar curves \cite{eu}. Taking the origin as reference for the computation of the moment of inertia, consider all planar curves joining two given points. In the language of theory of variations, the purpose it to minimize the energy functional 
\begin{equation}\label{y1}
[y]\mapsto \int_a^b(x^2+y^2)\sqrt{1+y'^2}
\end{equation}
 for all functions $y=y(x)$, $x\in [a,b]$, where $y(a)=y_0$ and $y(b)=y_1$ and $y_0,y_1$ are given. Recently, Dierkes and Huisken have generalized this problem in arbitrary dimensions \cite{dh}. For our purposes, we only treat the two dimensional case. Let $\Sigma$ be a connected, oriented surface and consider a smooth immersion of $\Sigma$ in the Euclidean $3$-space $(\r^3,\langle,\rangle)$. Introducing a parameter $\alpha\in\r$, we define the analog of the energy \eqref{y1} for surfaces and define 
$$E_\alpha[\Sigma]=\int_\Sigma |p|^\alpha\, d\Sigma,$$
where $d\Sigma$ denotes the area element induced on $\Sigma$ and we   identify $p\in\Sigma$ with its image by the immersion. We find the critical points of this functional by using standard arguments of calculus of variations for compactly supported variations of $\Sigma$. In particular, and for the differentiability of $E_\alpha$, we need that $\Sigma$ does not contain the origin $0$ of $\r^3$. Then critical points of $E_\alpha$ are characterized by the equation 
\begin{equation}\label{eq1}
H(p)=\alpha\frac{\langle \nu(p),p\rangle}{|p|^2},\quad p\in\Sigma.
\end{equation}
Here $H$ and $\nu$ are the mean curvature and the unit normal vector of $\Sigma$, respectively. The convention for the mean curvature is that $H$ is the sum of the principal curvatures, where a sphere of radius $r>0$ has $H=2/r$ with respect to the inward orientation.

\begin{definition} A surface $\Sigma$ of $\r^3-\{0\}$ is said to be an $\alpha$-stationary surface if $\Sigma$ satisfies Eq. \eqref{eq1}.
\end{definition}
In particular, $0$-stationary surfaces are minimal surfaces. In \cite{dh}, and for arbitrary dimensions, Dierkes and Huisken studied stability of spheres and minimal cones as well as minimizers of $E_\alpha$. Dierkes together the author have recently obtained new results on stationary surfaces. For example, existence of the Plateau problem \cite{dl1}, characterizations of the axisymmetric surfaces \cite{dl2}, ruled stationary surfaces \cite{lo1} or investigating those stationary surfaces with constant Gauss curvature \cite{lo2}.

Stationary surfaces also are minimal surfaces in a manifold endowed with a density. Indeed, given a positive density $\phi\in C^\infty(\r^3)$, let $dV_\phi=\phi\, dV_0$ and $dA_\phi=\phi\, dA_0$ be the weighted volume and area, where $dV_0$ and $dA_0$ are the Euclidean volume and area of $\r^3$. A surface $\Sigma$ is a critical point of $A_\phi$ if and only if the weighted mean curvature $H_\phi$ vanishes on $\Sigma$, where $H_\phi$ is 
\begin{equation*}
H_\phi= H- \langle\nu,\overline{\nabla}\phi \rangle,
\end{equation*}
and $\overline{\nabla}$ is the gradient in $\r^3$ \cite{ba}. If we choose the function $\phi(p)=|p|^\alpha$ defined in $\r^3-\{0\}$, then $H_\phi=0$ coincides with Eq. \eqref{eq1}. As a consequence, the class of $\alpha$-stationary surfaces satisfies a tangency principle such as it occurs with minimal surfaces: see Prop. \ref{pr-pt} in Sect. \ref{s2}. 

Among all values of $\alpha$, only at two ones there exist round spheres that are $\alpha$-stationary surfaces. To be precise, spheres centered at $0$ are $-2$-stationary surfaces and spheres containing $0$ are $-4$-stationary surfaces. This particular property of these spheres makes that the values $-2$ and $-4$ have a special role in the theory of $\alpha$-stationary surfaces.

Consider the inversion map of $\r^3$, 
\begin{equation}\label{in}
\Phi\colon\r^3-\{0\}\to\r^3-\{0\},\quad \Phi(p)=\frac{p}{|p|^2}.
\end{equation}
We prove the following result that establishes a correspondence, or duality, between stationary surfaces for different values of $\alpha$.

\begin{theorem} \label{tp}
The map $\Phi$ carries $\alpha$-stationary surfaces in $-(\alpha+4)$-stationary surfaces. 
\end{theorem}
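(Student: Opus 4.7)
The plan is to exploit the conformal nature of the inversion $\Phi$ and track how the mean curvature equation transforms pointwise. First I would record that the differential of $\Phi$ at $p$ is $D\Phi(p) = \frac{1}{|p|^{2}}R_{p}$, where $R_{p}=I-2pp^{\top}/|p|^{2}$ is the orthogonal reflection through the hyperplane $p^{\perp}$. In particular $|D\Phi(p)v|=|v|/|p|^{2}$, so $\Phi^{*}\langle,\rangle = \langle,\rangle/|p|^{4}$; if $X$ is a local parametrization of $\Sigma$ and $\tilde X=X/|X|^{2}$ the induced parametrization of $\tilde\Sigma=\Phi(\Sigma)$, then $\tilde g_{ij}=g_{ij}/|X|^{4}$. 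Because $R_{p}$ is orientation-reversing, the unit normal transforms (up to the orientation choice) as $\tilde\nu(\tilde p)=-R_{p}(\nu(p))=-\nu+2\langle\nu,p\rangle\,p/|p|^{2}$.

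Next I would apply the conformal change formula for mean curvature on a hypersurface of dimension $n$: if $\tilde g=e^{2\sigma}g$ and $\tilde\nu=e^{-\sigma}\nu$, then $\tilde H=e^{-\sigma}(H-n\,\langle\overline{\nabla}\sigma,\nu\rangle)$. Here $e^{2\sigma}=1/|p|^{4}$, so $\sigma=-2\ln|p|$, $\overline{\nabla}\sigma=-2p/|p|^{2}$, $e^{-\sigma}=|p|^{2}$ and $n=2$, yielding
\begin{equation*}
\tilde H \;=\; |p|^{2}H + 4\langle p,\nu\rangle .
\end{equation*}
Plugging in the defining relation $H=\alpha\langle\nu,p\rangle/|p|^{2}$ of an $\alpha$-stationary surface gives $\tilde H=(\alpha+4)\langle p,\nu\rangle$.

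The last step is to translate this identity into the variables on $\tilde\Sigma$. From $\tilde p=p/|p|^{2}$ one has $|p|=1/|\tilde p|$, and a short computation using $\tilde\nu=-R_{p}(\nu)$ gives $\langle\tilde\nu,\tilde p\rangle=-\langle\nu,p\rangle/|p|^{2}$. Therefore
\begin{equation*}
\tilde H(\tilde p)\;=\;(\alpha+4)\langle p,\nu\rangle\;=\;-(\alpha+4)\,|p|^{2}\langle\tilde\nu,\tilde p\rangle\;=\;-(\alpha+4)\,\frac{\langle\tilde\nu,\tilde p\rangle}{|\tilde p|^{2}},
\end{equation*}
which is exactly equation \eqref{eq1} with parameter $-(\alpha+4)$. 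Since $\Phi$ is an involution, the correspondence is bijective.

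The main obstacle I anticipate is not the analytic content but the careful bookkeeping of signs: the inversion reverses orientation, which introduces the minus sign in $\tilde\nu=-R_{p}(\nu)$ and ultimately flips the sign of $\alpha+4$ in the transformed equation. A parallel obstacle is the choice between citing the conformal change formula for $H$ or carrying out a direct second-derivative computation of $\tilde h_{ij}$ from $\tilde X=X/|X|^{2}$; both routes lead to the same formula $\tilde H=|p|^{2}H+4\langle p,\nu\rangle$, but the direct route is the more robust, self-contained option and is perhaps preferable given the central role of this identity in the paper.
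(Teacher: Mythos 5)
Your argument is correct and follows essentially the same route as the paper: both hinge on the identity $\widetilde{H}\circ\Phi(p)=|p|^2H(p)+4\langle\nu(p),p\rangle$, which the paper obtains by citing the known transformation $\widetilde{\kappa}_i=|p|^2\kappa_i+2\langle\nu,p\rangle$ of the principal curvatures under inversion, while you rederive it from the conformal-change formula for the mean curvature, using that $\Phi$ is an isometry from $(\r^3-\{0\},|p|^{-4}\langle,\rangle)$ onto $(\r^3-\{0\},\langle,\rangle)$; the substitution of the $\alpha$-stationary equation and the rewriting in the tilde variables are then identical to the paper's. One bookkeeping slip is worth fixing: the formula $\widetilde{H}=|p|^2H+4\langle p,\nu\rangle$ is the mean curvature of $\Phi(\Sigma)$ with respect to the pushed-forward normal $\widetilde{\nu}=R_p(\nu)=\nu-2\langle\nu,p\rangle p/|p|^2$ (no minus sign), and it is for this choice --- not for $-R_p(\nu)$ --- that $\langle\widetilde{\nu},\widetilde{p}\rangle=-\langle\nu,p\rangle/|p|^2$ holds; with your stated $\widetilde{\nu}=-R_p(\nu)$ the same computation gives $+\langle\nu,p\rangle/|p|^2$ and the mean curvature acquires the opposite sign. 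These two sign errors cancel because Eq. \eqref{eq1} is invariant under reversing the orientation (both $H$ and $\langle\nu,p\rangle$ change sign), so your conclusion is unaffected, but the intermediate statement about $\widetilde{\nu}$ should be corrected for internal consistency.
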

 For the value $\alpha=0$, we deduce: 
 
\begin{corollary}\label{c1} In Euclidean space $\r^3$, there is a one-to-one correspondence between minimal surfaces and $-4$-stationary surfaces.
\end{corollary}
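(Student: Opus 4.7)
The corollary is essentially immediate from Theorem \ref{tp}, so the plan is simply to exploit that theorem at two specific values of $\alpha$ together with the involutive nature of the inversion $\Phi$.

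First, I would apply Theorem \ref{tp} with $\alpha=0$: since $0$-stationary surfaces are exactly minimal surfaces (by the definition given just after Eq.~\eqref{eq1}), the map $\Phi$ sends every minimal surface in $\r^3\setminus\{0\}$ to a $-(0+4)=-4$-stationary surface. This gives a map from minimal surfaces to $-4$-stationary surfaces.

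Next, I would apply Theorem \ref{tp} with $\alpha=-4$: then $\Phi$ sends every $-4$-stationary surface to a $-(-4+4)=0$-stationary surface, that is, to a minimal surface. This gives a map in the reverse direction.

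Finally, to conclude that this is a one-to-one correspondence, I would note that $\Phi$ is an involution of $\r^3\setminus\{0\}$, i.e.\ $\Phi\circ\Phi=\mathrm{id}$, as follows immediately from the formula \eqref{in}. Hence the two maps just constructed are mutually inverse, establishing the bijection. There is no real obstacle here: the content is entirely carried by Theorem \ref{tp}, and the only thing to check beyond the choice $\alpha=0$ is that the inversion is an involution and that the image surface still lies in $\r^3\setminus\{0\}$ (which is automatic, since $\Phi$ is a diffeomorphism of $\r^3\setminus\{0\}$ onto itself).
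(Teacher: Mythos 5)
Your proposal is correct and matches the paper, which simply deduces the corollary from Theorem \ref{tp} at $\alpha=0$; your additional remarks about applying the theorem at $\alpha=-4$ and using that $\Phi$ is an involution just make explicit the bijectivity that the paper leaves implicit.
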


 Theorem \ref{tp} will be proved in Sect. \ref{s3}. As a consequence of this duality, it is possible to address problems of $-4$-stationary surfaces via its formulation in the theory of minimal surfaces. In this paper we exploit this correspondence to obtain the following results. 

\begin{theorem}\label{t2}
Planar discs of vector planes and spherical caps of spheres passing through $0$ are the only compact $-4$-stationary surfaces with circular boundary
\end{theorem}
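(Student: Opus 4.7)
The plan is to use the duality of Corollary~\ref{c1} to convert the problem into the classification of compact minimal surfaces with circular boundary, and then invert back. Let $\Sigma$ be a compact $-4$-stationary surface with boundary circle $C$. Since~\eqref{eq1} is singular at the origin, $0\notin\Sigma$, and in particular $0\notin C$; thus $\Phi$ from~\eqref{in} is smooth on a neighborhood of $\Sigma$, and $M:=\Phi(\Sigma)$ is compact. By Corollary~\ref{c1}, $M$ is a minimal surface, and its boundary $\Phi(C)$ is again a circle, since the inversion of a circle not through $0$ is a circle.

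I would then invoke the convex hull property for compact immersed minimal surfaces in $\mathbb{R}^3$: as the coordinate functions on $M$ are harmonic for the induced metric, the maximum principle forces $M$ to lie in the convex hull of $\partial M$. This convex hull is the closed flat disc $\bar D$ spanned by $\Phi(C)$ in its containing plane $\Pi$, so $M\subset\Pi$. A two-dimensional connected compact immersed surface inside the plane $\Pi$, whose boundary equals $\partial\bar D$, must be $\bar D$ itself (a standard covering-space argument using the simple connectedness of $\bar D$ and the fact that the immersion $M\to\Pi$ is a local diffeomorphism).

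Finally, applying $\Phi$ once more gives $\Sigma=\Phi(\bar D)$, and I would split on whether $0\in\Pi$. Note first that $0\notin\bar D$, for otherwise $\Sigma$ would be unbounded. If $0\in\Pi$, the inversion $\Phi$ preserves $\Pi\setminus\{0\}$ and acts there as a two-dimensional inversion, so $\Phi(\bar D)$ is a disc in $\Pi$; hence $\Sigma$ is a planar disc of the vector plane $\Pi$. If $0\notin\Pi$, the image $\Phi(\Pi)$ is a sphere through the origin, and $\Sigma=\Phi(\bar D)$ is a bounded piece of it with boundary $C$, i.e.\ a spherical cap of a sphere through $0$.

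The main obstacle is the planarity step $M\subset\Pi$ together with the identification $M=\bar D$; the rest is an unpacking of how $\Phi$ acts on planes through or missing the origin.
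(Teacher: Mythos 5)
Your proposal is correct and follows essentially the same route as the paper: invert via $\Phi$ to get a compact minimal surface bounded by a circle, identify it as the flat disc, and then split on whether the containing plane passes through the origin. The only difference is that you supply the justification (convex hull property plus the covering argument) for the planar-disc step, which the paper simply cites as the known fact that compact minimal surfaces spanning planar curves are planar domains.
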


\begin{theorem}\label{t3} Spheres passing through $0$ are the only $-4$-stationary surfaces properly immersed in $\r^3$ and contained in a vector halfspace.
\end{theorem}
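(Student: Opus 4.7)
The plan is to reduce the statement, via the inversion duality, to the corresponding assertion for minimal surfaces, and then apply the classical half-space theorem of Hoffman and Meeks.

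Let $\Sigma$ satisfy the hypotheses, write $P$ for the vector plane bounding the halfspace $H^+$, and set $M:=\Phi(\Sigma)$. By Theorem~\ref{tp} with $\alpha=-4$, $M$ is a minimal surface in $\r^3-\{0\}$. Because $\Phi$ fixes each open ray emanating from the origin (only the distance along the ray is inverted), it maps every vector halfspace onto itself; in particular $M\subset H^+$. Moreover, since $\Phi$ is a diffeomorphism of $\r^3-\{0\}$, the proper immersion property transfers from $\Sigma$ to $M$.

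I would then invoke the Hoffman-Meeks half-space theorem, which asserts that a connected, properly immersed minimal surface in $\r^3$ contained in a halfspace is a plane. Being contained in $H^+$, this plane must be parallel to $P$. A plane parallel to $P$ but not passing through $0$ is carried by $\Phi$ to a round sphere through $0$ tangent to $P$ at $0$ (a classical property of inversions), while the case of the plane $P$ itself is excluded because the corresponding $\Sigma=\Phi(P\setminus\{0\})=P\setminus\{0\}$ lies on $\partial H^+$ and hence not in the (open) vector halfspace. Unrolling the inversion thus identifies $\Sigma$ as a sphere through $0$ contained in $H^+$.

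The main obstacle is that the Hoffman-Meeks theorem requires $M$ to be properly immersed in $\r^3$, whereas \emph{a priori} our $M$ is only properly immersed in $\r^3-\{0\}$: the origin could be an accumulation point of $M$, a situation which corresponds, via the computation $|\Phi(p)|=|p|^{-1}$, exactly to $\Sigma$ being unbounded. I would deal with this case by observing that $\overline{M}\setminus M\subseteq\{0\}$, so that $0$ is an isolated singularity of the minimal variety $M\cup\{0\}$; one then needs to verify that the density of $M$ at $0$ is finite so that the standard removable-singularity theorem for minimal surfaces applies, producing a smooth extension of $M$ through $0$. The extended surface is properly immersed in $\r^3$ and contained in $\overline{H^+}$, making Hoffman-Meeks directly applicable and completing the argument.
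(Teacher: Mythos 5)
Your proof follows essentially the same route as the paper: invert, observe that $\Phi(\Sigma)$ is a properly immersed minimal surface contained in the halfspace, apply the Hoffman--Meeks strong halfspace theorem, and invert the resulting plane $z=\delta$ back to a sphere through the origin. You go further than the paper on one genuine point: the paper simply asserts that ``inversions preserve proper immersions,'' which is true only for properness relative to $\r^3-\{0\}$, whereas Hoffman--Meeks requires properness in all of $\r^3$; if $\Sigma$ is unbounded then $\Phi(\Sigma)$ accumulates at $0$ and the theorem does not apply verbatim (when $\Sigma$ is bounded away from infinity, as the spheres $\s^2_r(r)$ are, $\Phi(\Sigma)$ stays away from $0$ and there is no issue). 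Your proposed repair---treating $0$ as an isolated singularity of $\Phi(\Sigma)$ and removing it---is a reasonable strategy, but as you yourself note, the finite-density (or finite-area) hypothesis needed to invoke the removable-singularity theorem is not verified, so this step remains open; it is, however, a gap that the paper's own proof shares without acknowledging it.
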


Theorem \ref{t2}  will proved in Sect. \ref{s4} while Thm. \ref{t3}  in Sect. \ref{s5}.  Finally in Sect. \ref{s6} we formulate the Bj\"{o}rling problem giving a solution. As a consequence, we will show examples  $-4$-stationary surfaces with the topology of  a M\"{o}bius strip.

\section{Preliminaries}\label{s2}

In this section we show some examples of stationary surfaces, give the tangency principle and some applications. 

In the definition of the energy functional $E_\alpha$, the value $|p|$ is the distance of $p\in\Sigma$ to the origin $0\in\r^3$. This implies that the corresponding Euler-Lagrange equation \eqref{eq1} is not preserved, in general, by rigid motions of $\r^3$, as for example translations. So, if $\Sigma$ is an $\alpha$-stationary surface and $T\colon\r^3\to\r^3$ is a translation, then $T(\Sigma)$ is not an $\alpha$-stationary surface. However, vector isometries as well as dilations from $0\in\r^3$, preserve solutions of \eqref{eq1} for the same value of $\alpha$. 

We give examples of $\alpha$-stationary surfaces. For this, we focus in surfaces of $\r^3$ with constant mean curvature, to be precise, planes and spheres, and we ask which ones are stationary surfaces. We point out that cylinders are not stationary surfaces. The following result is straightforward.

\begin{proposition}\label{pr-ex}
\begin{enumerate}
\item A plane is an $\alpha$-stationary surface if and only if it is a vector plane. This occurs for all $\alpha\in\r$.
\item The only stationary spheres are spheres centered at $0$ ($\alpha=-2$) and spheres containing $0$ ($\alpha=-4$). 
\end{enumerate}
\end{proposition}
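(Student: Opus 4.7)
The plan is to substitute directly into \eqref{eq1} and see when the resulting identity holds at every point of the surface; no sophisticated tool is needed, which matches the author's remark that the proposition is ``straightforward.''

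For part (1), I would start from the observation that a plane $P$ has $H\equiv 0$ and that $\langle\nu,p\rangle$ is constant along $P$, equal to the signed distance from $0$ to $P$. Thus \eqref{eq1} reduces to the scalar identity $\alpha\,d(0,P)=0$ on $P$. For $\alpha\ne 0$ this forces $d(0,P)=0$, i.e.\ $P$ contains the origin and so is a vector plane. Conversely, on any vector plane one has $\langle\nu,p\rangle\equiv 0$, so \eqref{eq1} holds for every $\alpha\in\r$; the minimal case $\alpha=0$ is included trivially.

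For part (2), I would parametrize a sphere $S$ of centre $c\in\r^3$ and radius $r>0$ by $p=c+r\nu$ with $\nu$ the outward unit normal, so that $H=-2/r$, $\langle\nu,p\rangle=\langle c,\nu\rangle+r$ and $|p|^2=|c|^2+2r\langle c,\nu\rangle+r^2$. Substituting these into \eqref{eq1} and clearing denominators, a short manipulation yields a relation of the form
$$-2|c|^2-(\alpha+2)r^2 \;=\; r(\alpha+4)\langle c,\nu\rangle,$$
which must hold for every unit vector $\nu$. Since the Gauss map of $S$ is surjective onto $\s^2$, $\langle c,\nu\rangle$ sweeps the whole interval $[-|c|,|c|]$, a nontrivial interval unless $c=0$. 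Consequently the coefficient $r(\alpha+4)$ on the right must vanish, forcing either $c=0$ or $\alpha=-4$. The first alternative combined with the surviving constant part yields $\alpha=-2$; the second yields $|c|=r$, equivalently $0\in S$. Both implications are reversible, completing the classification.

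I do not foresee a genuine obstacle, as the argument is purely algebraic once $H$, $\langle\nu,p\rangle$ and $|p|^2$ are written in the above parametrization. The only point deserving a little care is the passage from ``the identity holds pointwise'' to ``the coefficient of $\langle c,\nu\rangle$ must vanish,'' which rests on the range of the Gauss map of $S$ and cleanly singles out the two distinguished values $\alpha=-2$ and $\alpha=-4$ highlighted in the paragraph preceding the proposition.
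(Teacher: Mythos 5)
Your proposal is correct and is exactly the direct substitution into Eq.~\eqref{eq1} that the paper has in mind when it calls the result straightforward (no proof is given there); the computation for spheres, including the reduction to $-2|c|^2-(\alpha+2)r^2=(\alpha+4)r\langle c,\nu\rangle$ and the use of the surjectivity of the Gauss map, checks out and cleanly produces the two cases $\alpha=-2$, $c=0$ and $\alpha=-4$, $|c|=r$. Your remark that the ``only if'' in part (1) requires $\alpha\neq 0$ is also the right reading of the statement, since every plane is minimal.
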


As we said in the Introduction, the values $-2$ and $-4$ will play a special role in the range $\alpha$ of $\alpha$-stationary surfaces. An example is in the study of closed (compact without boundary) $\alpha$-stationary surfaces. In \cite[Thm. 1.6]{dh} the authors proved the following result: 
\begin{enumerate}[(i)]
\item If $\alpha>-2$, there are no closed $\alpha$-stationary surfaces.
\item If $\alpha=-2$, the only stable closed stationary surfaces are spheres centered at the origin. 
\item Let $\alpha<-2$. If $\Sigma$ is a (non-extendible) $\alpha$-stationary surface, then its closure $\overline{\Sigma}$ must contain the origin $0$ of $\r^3$. In consequence, there are not closed $\alpha$-stationary surfaces
\end{enumerate}
The proof of (i)  involves the use of the Hopf maximum principle for the function $|p|^2$. If $\alpha=-2$, the statement was proved using the expression of the second variation of the energy $E_\alpha$. With a different approach, item (i) can be also proved using the tangency principle of weighted minimal surfaces. Since $\alpha$-stationary surfaces are minimal in the weighted space $(\r^3,|p|^\alpha)$, we have the following consequence of the maximum principle:

\begin{proposition}[Tangency principle]\label{pr-pt}
Let $\Sigma_1$ and $\Sigma_2$ be two (connected) $\alpha$-stationary surfaces and assume that they are tangent at some $p\in \Sigma_1\cap \Sigma_2$, where $p$ is a common interior point or a common boundary point where $\partial\Sigma_1$ and $\partial\Sigma_2$ are tangent at $p$. If $\Sigma_1$ lies at one side of $\Sigma_2$, then $\Sigma_1=\Sigma_2$ in the largest neighborhood of $p$ in $\Sigma_1\cap \Sigma_2$. 
\end{proposition}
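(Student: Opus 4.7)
The plan is to reduce Proposition \ref{pr-pt} to the classical strong maximum principle (and the boundary-point lemma) for quasilinear elliptic equations, exploiting that near any $p\neq 0$ the weight $\phi(q)=|q|^\alpha$ is smooth and positive, so the $\alpha$-stationary equation \eqref{eq1} is uniformly elliptic in a neighborhood of $p$.

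First I would choose local coordinates adapted to the common tangent plane at $p$: since $\Sigma_1$ and $\Sigma_2$ are tangent, both admit, in a small common neighborhood of $p$, a graphical representation $z=u_1(x,y)$ and $z=u_2(x,y)$ over the common tangent plane, with the chosen unit normals agreeing at $p$. Labeling so that $\Sigma_1$ lies on the side corresponding to $u_1\geq u_2$, the hypothesis "$\Sigma_1$ lies at one side of $\Sigma_2$" becomes $w:=u_1-u_2\geq 0$ with $w(p)=0$. Equation \eqref{eq1} translates into a quasilinear equation $Q[u_i]=0$ of the form
\begin{equation*}
\mathrm{div}\!\left(\frac{Du_i}{\sqrt{1+|Du_i|^2}}\right)=\alpha\,\frac{\langle \nu_i(x,y,u_i),(x,y,u_i)\rangle}{x^2+y^2+u_i^2},
\end{equation*}
whose left-hand side is the minimal surface operator and whose right-hand side is smooth in $(x,y,u,Du)$ on a neighborhood of $p$ (since $p\neq 0$).

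The key step is the standard subtraction trick: writing
\begin{equation*}
0=Q[u_1]-Q[u_2]=\int_0^1\frac{d}{dt}Q[tu_1+(1-t)u_2]\,dt,
\end{equation*}
one obtains a homogeneous linear elliptic equation $Lw=0$, where $L$ is a second-order operator with bounded measurable coefficients and an ellipticity constant controlled below by the slopes of $u_1,u_2$ (which can be made arbitrarily small by shrinking the neighborhood). In the interior case I would then invoke Hopf's strong maximum principle: $w\geq 0$ attains the interior minimum $0$ at $p$, hence $w\equiv 0$ near $p$, giving $\Sigma_1=\Sigma_2$ in a neighborhood. In the boundary case, the extra hypothesis that $\partial\Sigma_1$ and $\partial\Sigma_2$ are tangent at $p$ means both boundary curves, written as graphs over a common line in the tangent plane, share first derivatives at the projection of $p$, so all tangential derivatives of $w$ vanish there; applying the Hopf boundary-point lemma to a suitable interior sphere forces the inward normal derivative of $w$ at $p$ to vanish as well, contradicting the boundary-point lemma unless $w\equiv 0$.

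The main obstacle is the usual one when applying Hopf's principle to a linear equation with a possibly non-sign-definite zeroth-order coefficient coming from differentiating $\langle\nu,p\rangle/|p|^2$ in $u$: one must check that $L$ has the form compatible with the strong maximum principle for nonnegative solutions. This is handled in the standard way by noting that $w(p)=0$, so the subtraction trick may be applied to $w$ directly using the derivative of the coefficients along the segment $tu_1+(1-t)u_2$; since $\phi$ and its derivatives are smooth on a compact neighborhood of $p$ in $\r^3\setminus\{0\}$, the resulting operator $L$ has bounded coefficients and Hopf applies. Once this technical point is settled, both the interior and boundary conclusions follow in the largest neighborhood of $p$ in $\Sigma_1\cap\Sigma_2$ by a standard connectedness/open-closed argument.
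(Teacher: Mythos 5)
Your proposal is correct and follows essentially the same route as the paper, which simply records the proposition as the standard consequence of the maximum principle for weighted minimal surfaces in $(\r^3,|p|^\alpha)$: you write both surfaces as graphs over the common tangent plane, linearize the quasilinear equation by the subtraction trick, and apply the Hopf interior and boundary-point principles, correctly noting that the sign-indefinite zeroth-order coefficient is harmless because the minimum of $w\geq 0$ is zero (so one may pass to $L-c^{+}$). The paper does not write out these details, so your argument is the filled-in version of its one-line justification.
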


We introduce the next notation. Let $\s^2(r)$ denote the sphere with center $0$ and radius $r>0$ and $\s^2_r(r)$ denotes the sphere of radius $r>0$ and centered at $(0,0,r)$. Then $\s^2(r)$ is a $-2$-stationary surface while $\s^2_r(r)$ is a $-4$-stationary surface. 

Let $\Sigma$ be an $\alpha$-stationary closed surface. If we take $r$ sufficiently big, then $\Sigma$ is included in the ball determined by $\s^2(r)$. Decreasing $r$, $r\searrow 0$, we arrive until the first sphere $\s^2(r_1)$ that touches   $\Sigma$, which it is a  common interior point. Then the Tangency principle proves that $\alpha\leq -2$ and in case that $\alpha=-2$, then $\Sigma=\s^2(r_1)$ (without any assumption on stability of $\Sigma$). This proves (i) and (ii) of the Dierkes-Huisken's theorem. However, Theorem \ref{tp} allows to prove the non-existence of closed $\alpha$-stationary surfaces for $\alpha<-2$ if one knows the result when $\alpha>-2$.

\begin{corollary} There are no closed $\alpha$-stationary surfaces if $\alpha<-2$.
\end{corollary}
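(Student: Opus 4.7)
The plan is to argue by contradiction, leveraging Theorem \ref{tp} to transport the hypothesis into a regime where item (i) of the Dierkes--Huisken theorem (already established in the preceding discussion via the tangency principle) applies directly.

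Suppose, toward a contradiction, that $\Sigma$ is a closed (compact, without boundary) $\alpha$-stationary surface with $\alpha<-2$. By definition, $\Sigma\subset\r^3-\{0\}$, so the continuous function $p\mapsto |p|$ attains a positive minimum on the compact set $\Sigma$; in particular, $\Sigma$ lies in some closed annulus $\{r_1\leq |p|\leq r_2\}$ with $0<r_1\leq r_2$. Since the inversion $\Phi$ of \eqref{in} is a diffeomorphism of $\r^3-\{0\}$ onto itself, the image $\Phi(\Sigma)$ is again a smooth, compact, connected surface without boundary, sitting inside the annulus $\{1/r_2\leq |p|\leq 1/r_1\}$ and thus avoiding the origin.

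By Theorem \ref{tp}, $\Phi(\Sigma)$ is a $-(\alpha+4)$-stationary surface. The hypothesis $\alpha<-2$ gives
\begin{equation*}
-(\alpha+4)=-\alpha-4>2-4=-2,
\end{equation*}
so $\Phi(\Sigma)$ is a closed stationary surface for a parameter strictly greater than $-2$. This directly contradicts item (i) of the Dierkes--Huisken theorem (reproved in the previous paragraph of Section \ref{s2} by sandwiching $\Sigma$ between spheres $\s^2(r)$ centered at $0$ and invoking the tangency principle).

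There is essentially no obstacle here: the only thing to verify carefully is that closedness is preserved under $\Phi$, which follows because $\Sigma$ is bounded away from $0$ and $\Phi$ is a diffeomorphism of $\r^3-\{0\}$. All the real work was done in the proof of item (i) for $\alpha>-2$ and in Theorem \ref{tp}; the corollary is a one-line consequence of the duality.
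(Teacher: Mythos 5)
Your argument is exactly the paper's: apply the duality of Theorem \ref{tp} to convert a hypothetical closed $\alpha$-stationary surface with $\alpha<-2$ into a closed $-(\alpha+4)$-stationary surface with $-(\alpha+4)>-2$, contradicting the non-existence result for that range. The only difference is that you spell out why $\Phi$ preserves closedness (compactness of $\Sigma$ away from the origin), which the paper states without elaboration; the proof is correct.
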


\begin{proof} Inversions preserve closedness of surfaces. On the other hand, if $\Sigma$ is an $\alpha$-stationary surface for $\alpha<-2$, then $\Phi(\Sigma)$ is a $-(4+\alpha)$-stationary surface where now $-(\alpha+4)>-2$. Since $\Phi(\Sigma)$ cannot be closed, neither $\Sigma$.
\end{proof}

\section{Proof of Theorem \ref{tp}}\label{s3}

The proof of Thm. \ref{tp} is  based on the computation of the mean curvature of an inverse surface. Consider the inversion map $\Phi$ defined in \eqref{in}. If $\Sigma$ is a surface of $\r^3$, $0\not\in\Sigma$, then $\widetilde{\Sigma}:=\Phi(\Sigma)$ is a surface and there is a relation between the principal curvatures of $\Sigma$ and $\widetilde{\Sigma}$. Let $\kappa_1$ and $\kappa_2$ be the principal curvatures of $\Sigma$ with respect to the unit normal $\nu$. Let $h\colon\Sigma\to\r$ be the support function given by $h(p)=\langle \nu(p),p\rangle$. We denote by tildes $\sim$ the corresponding objects in $\widetilde{\Sigma}$, that is, $\widetilde{\kappa_i}$, $\widetilde{\nu}$, and so on. Then it is known that 
\begin{equation}\label{nn}
\begin{split}
\widetilde{\kappa}_i\circ\Phi(p)&=|p|^2\kappa_i(p)+2h(p),\\
 \widetilde{\nu}(p)&=\nu(p)-2h(p)\frac{p}{|p|^2}.
 \end{split}
 \end{equation}
 See for example \cite[Ch. 3. ex. 15]{mr} or \cite{wh}. In consequence, 
 $$\widetilde{H}\circ\Phi(p)=|p|^2H(p) + 4h(p).$$
We express this identity in terms of the surface $\widetilde{\Sigma}$. We have 
$$h=-\frac{\widetilde{h}}{|\widetilde{p}|^2},\quad |p|^2=\frac{1}{|\widetilde{p}|^2}.$$
Therefore
\begin{equation}\label{du}
\widetilde{H}\circ\Phi=\frac{H-4\widetilde{h}}{|\widetilde{p}|^2}.
\end{equation}

If $\Sigma$ is an $\alpha$-stationary surface, then its mean curvature $H$ satisfies \eqref{eq1}. Hence, and thanks to \eqref{du}, the mean curvature $\widetilde{H}$ of its inverse surface $\widetilde{\Sigma}$ fullfils
$$\widetilde{H}\circ\Phi=\frac{\alpha h/|p|^2-4\widetilde{h}}{|\widetilde{p}|^2}=\frac{-\alpha\widetilde{h}-4\widetilde{h}}{|\widetilde{p}|^2}=-(\alpha+4)\frac{\widetilde{h}}{|\widetilde{p}|^2}.$$
This proves that $\widetilde{\Sigma}$ is a $-(\alpha+4)$-stationary surface. The converse is analogous and this finishes the proof of Thm. \ref{tp}

As one can see in \cite{dh}, the value $\alpha=-2$ is the frontier between the class of stationary surfaces for $\alpha>-2$ and that of $\alpha<-2$. Theorem \ref{tp} now establishes a connection between both ones, because the function $\alpha\mapsto-(\alpha+4)$ maps the interval $(-\infty,-2)$ into $(-2,\infty)$ leaving invariant the value $\alpha=-2$.

\begin{remark} \label{r1}
Rigid motions of $\r^3$ carry minimal surfaces into minimal surfaces. However, these rigid motions do not carry, in general, stationary surfaces into stationary surface, as for example, a translation $T(p)=p+\vec{v}$, where $\vec{v}\in\r^3$. If $\Sigma$ is a minimal surface, then $T(\Sigma)$ is a minimal surface. Both minimal surfaces give two stationary surfaces via $\Phi$ and Cor. \ref{c1}, which have no a relation between them. In other words, the following diagram   is not commutative. 
 $$
 \begin{CD}
 \{-4-\mbox{stationary surfaces} \}@>\mathlarger{\Phi}>> \{\mbox{minimal surfaces}\}\\
 @V ? V V @VV \mbox{translation} V\\
 \{-4-\mbox{stationary surfaces} \}@< \mathlarger{\Phi}^{-1} << \{\mbox{minimal surfaces}\}
 \end{CD}
 $$
Notice that 
$$\Phi^{-1}\circ T\circ\Phi(p)=\dfrac{p+|p|^2\vec{v}}{1+|p|^2|\vec{v}|^2+2\langle p,v\rangle}.$$
\end{remark}

\section{Stationary surfaces with circular boundary}\label{s4}

In this section we give an application of Cor. \ref{c1} in the study of compact stationary surfaces with circular boundary. We focus in the case that the circle spans a spherical cap, so we are considering the cases $\alpha=-2$ and $\alpha=-4$. Let $\Gamma$ be a circle contained in $\s^2(r)$ or in $\s_r^2(r)$. Then $\Gamma$ separates these spheres in two components as follows:
\begin{enumerate}
\item If $\alpha=-2$, then $\s^2(r)-\Gamma$ are two spherical caps. 
\item If $\alpha=-4$, then $\s_r^2(r)-\Gamma$ is formed by a spherical cap and a punctured spherical cap and only the first one is compact. 
\end{enumerate}
On the other hand, any round disc in a vector plane which and not containing $0$ is an $\alpha$-stationary surface spanning a circle.  This surface is compact while if $D$ is a round dis containing $0$ in its interior, then $D_\{0\}$ is a $\alpha$-stationary with circular boundary but it is not compact.

Then the following question arises in a natural way. 

\begin{quote}
Question: Let $\alpha\in\{-2,-4\}$. Are planar discs and spherical caps the only compact $\alpha$-stationary surfaces with circular boundary? 
\end{quote}
This question is similar to that in the theory of constant mean curvature (cmc) surfaces of $\r^3$. Recall that there are examples of non-spherical cmc compact surfaces whose boundary is a circle \cite{ka}. However, the boundary versions of the Alexandrov and Hopf theorems are yet unanswered \cite{lo}.

By the above two examples of spheres, not any circle $\Gamma$ of $\r^3$ spans a spherical cap of $\s^2(r)$ or $\s_r^2(r)$. This is the case, for instance, of the circle $\{z=0, (x-2)^2+y^2=1\}$ which it is contained in the horizontal plane $z=0$. A slightly change of the problem is assuming that $\Gamma$ is contained in some of the above two spheres and asking if spherical caps of such spheres are the only compact $-2$ or $-4$-stationary surfaces. Under such hypothesis, the tangency principle provides an affirmative answer in the case $\alpha=-2$.

\begin{theorem}\label{t22} Let $\Gamma$ be a circle contained in $\s^2(r_0)$. Then the spherical caps of $\s^2(r_0)$ determined by $\Gamma$ are the only compact $-2$-stationary surfaces spanning $\Gamma$.
\end{theorem}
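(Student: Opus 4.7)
The plan is to run a classical Alexandrov-type comparison argument, using as barriers the one-parameter family of origin-centered spheres $\{\s^2(r):r>0\}$. By Prop.~\ref{pr-ex}, each $\s^2(r)$ is itself a $-2$-stationary surface, so the tangency principle (Prop.~\ref{pr-pt}) can be applied to the pair $(\Sigma,\s^2(r))$ at any common interior tangency point.

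Let $\Sigma$ be a compact connected $-2$-stationary surface with $\partial\Sigma=\Gamma\subset\s^2(r_0)$, and consider the smooth function $f\colon\Sigma\to\r$, $f(p)=|p|^2$. By compactness $f$ attains a maximum $r_+^2$ and a minimum $r_-^2$, while $f\equiv r_0^2$ on $\Gamma$. First I would show $r_+=r_0$. Assume for contradiction that $r_+>r_0$; then the maximum of $f$ is attained at an interior point $p_+\in\Sigma\setminus\Gamma$, and $\Sigma$ is contained in the closed ball of radius $r_+$. At $p_+$ the tangent planes to $\Sigma$ and $\s^2(r_+)$ coincide (both are orthogonal to $p_+$) and $\Sigma$ lies locally on one side of $\s^2(r_+)$. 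Prop.~\ref{pr-pt}, combined with unique continuation for the elliptic equation \eqref{eq1}, forces $\Sigma\subset\s^2(r_+)$. But then $\Gamma\subset\s^2(r_+)\cap\s^2(r_0)=\emptyset$ (distinct concentric spheres), contradicting the assumption that $\Gamma$ is a circle. The symmetric argument, touching $\Sigma$ from inside by $\s^2(r)$ as $r\nearrow r_-$, rules out $r_-<r_0$.

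Combining the two one-sided bounds gives $f\equiv r_0^2$, so $\Sigma\subset\s^2(r_0)$. Since $\Gamma$ separates $\s^2(r_0)$ into exactly two spherical caps and $\Sigma$ is a compact connected surface with boundary $\Gamma$ lying in $\s^2(r_0)$, $\Sigma$ must coincide with the closure of one of these two caps, which finishes the proof.

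The main obstacle is the passage from the \emph{local} coincidence given by Prop.~\ref{pr-pt} to the \emph{global} inclusion $\Sigma\subset\s^2(r_+)$. This step relies on unique continuation: because the density $|p|^\alpha$ is real-analytic on $\r^3\setminus\{0\}$, solutions of the weighted minimal surface equation are real-analytic, so the locus where $\Sigma$ and $\s^2(r_+)$ agree is both open and closed in the connected surface $\Sigma$. A minor bookkeeping point is that the interior maximum point $p_+$ really does lie off $\Gamma$, which is automatic because $f=r_0^2<r_+^2$ along $\Gamma$. Once these ingredients are in place, the rest is a routine application of the tangency principle.
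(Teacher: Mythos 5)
Your proposal is correct and is essentially the paper's own argument: the paper also sweeps the concentric family $\s^2(r)$ inward from infinity and outward from the origin, applying the tangency principle at the first touching sphere on each side, which is exactly your max/min of $f(p)=|p|^2$ phrased as a foliation argument. Your version is slightly tidier in that placing the extremum at an interior point avoids the boundary-touching case the paper has to discuss separately, and you make explicit the local-to-global (unique continuation) step that the paper leaves implicit.
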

\begin{proof}
 Let $\Sigma$ a compact $-2$-stationary surface spanning a circle $\Gamma$.   If $r\nearrow\infty$, then $\s^2(r)$ does not intersect $\Sigma$ for big values of $r$ because $\Sigma$ is compact. Let $r\searrow r_0$ until the first sphere $\s^2(r_1)$ that touches  $\Sigma$, $r_1\geq r_0$. If the touching point is an interior point or a boundary point where the boundaries of both surfaces are tangent,  then the Tangency principle implies that $\Sigma\subset\s^2(r_1)$, $r_1=r_0$, obtaining the result. Otherwise, $r_1=r_0$ and $\mbox{int}(\Sigma)$ is contained in open ball determined by $\s^2(r_0)$. We see that this situation is not possible. We now repeat the argument with spheres $\s^2(r)$ and $r$ close to $0$ and $\s^2(r)\cap\Sigma=\emptyset$. This is possible because $\Sigma$ is compact and $0\not\in\Sigma$. Increasing $r\nearrow r_0$, then there is a first radius $r_2>0$ such that  $\s^2(r_2)$ touches $\mbox{int}(\Sigma)$, and $r_2<r_0$. The Tangency principle proves that $\Sigma\subset\s^2(r_2)$, hence $r_0=r_2$, which it is a contradiction. 
\end{proof}

Notice that the key in the proof is the fact that the  family of spheres $\{\s^2(r)\colon r>0\}$ centered at $0$ provides a foliation of $\r^3-\{0\}$. In consequence, it is not possible to give a similar proof in the case $\alpha=-4$. For example, and following the same argument of the proof, we do not know if it is possible to find a sphere $\s_r^2(r)$ of sufficiently radius so $\Sigma$ is included in the ball defined by this sphere: the surface $\Sigma$ may have points in both sides of the plane $z=0$. However, in case that the circle $\Gamma$ is contained in a vector plane, we prove the analogous result for $\alpha=-4$. 

\begin{theorem} Let $\Gamma$ be a circle contained in a vector plane and let $D$ be the closed planar disc bounded by $\Gamma$. Assume $0\not\in D$. Then $D$ is the only compact $-4$-stationary surface spanning $\Gamma$.
\end{theorem}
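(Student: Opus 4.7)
The strategy is to apply Corollary \ref{c1} to translate the problem into a uniqueness question for compact minimal surfaces bounded by a planar circle, which is then handled by the tangency principle already developed in Section \ref{s2}.

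Let $\Pi$ denote the vector plane containing $\Gamma$, and set $\widetilde{\Sigma}:=\Phi(\Sigma)$. By Corollary \ref{c1}, $\widetilde{\Sigma}$ is a minimal surface; since $\Sigma$ is compact and, by definition of $-4$-stationary, disjoint from $0$, the image $\widetilde{\Sigma}$ is also compact. Because the inversion $\Phi$ preserves every vector plane, $\widetilde{\Gamma}:=\Phi(\Gamma)\subset\Pi$ is again a circle. The hypothesis $0\notin D$ ensures that the planar inversion $\Phi|_\Pi$ is smooth on a neighborhood of $D$, so $\widetilde{D}:=\Phi(D)\subset\Pi$ is a compact region bounded by $\widetilde{\Gamma}$, which by connectedness must be the closed planar disc in $\Pi$ bounded by $\widetilde{\Gamma}$ and not containing $0$.

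The next step is to show $\widetilde{\Sigma}\subset\Pi$ by sliding a family of affine planes $\Pi_t$ parallel to $\Pi=\Pi_0$ at signed distance $t$. For $|t|$ sufficiently large, $\Pi_t$ misses the compact set $\widetilde{\Sigma}$. Decreasing $t$ from $+\infty$, let $t_1\geq 0$ be the first value for which $\Pi_t$ meets $\widetilde{\Sigma}$. The contact point cannot lie on $\partial\widetilde{\Sigma}=\widetilde{\Gamma}\subset\Pi_0$ unless $t_1=0$, so for $t_1>0$ it must be an interior tangency. Since planes are $\alpha$-stationary for all $\alpha$, in particular minimal, Prop. \ref{pr-pt} applied to $\widetilde{\Sigma}$ and $\Pi_{t_1}$ would force $\widetilde{\Sigma}\subset\Pi_{t_1}$, contradicting $\widetilde{\Gamma}\subset\widetilde{\Sigma}$ and $\widetilde{\Gamma}\not\subset\Pi_{t_1}$. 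Hence $t_1=0$, and symmetrically for $t<0$, so $\widetilde{\Sigma}\subset\Pi$.

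Once $\widetilde{\Sigma}$ is a connected compact $2$-dimensional surface contained in $\Pi$ with $\partial\widetilde{\Sigma}=\widetilde{\Gamma}$, an application of the Jordan curve theorem in $\Pi$ forces $\widetilde{\Sigma}$ to coincide with the compact region bounded by $\widetilde{\Gamma}$, namely $\widetilde{D}$. Applying $\Phi$ once more and using that $\Phi\circ\Phi=\mbox{id}$ gives $\Sigma=\Phi(\widetilde{\Sigma})=\Phi(\widetilde{D})=D$. The one nontrivial subtlety I expect is the last topological step when $\widetilde{\Sigma}$ is only immersed rather than embedded: because $\widetilde{\Sigma}$ is a connected minimal surface contained in a plane, the immersion is a local diffeomorphism into $\Pi$, and the fact that its boundary is a single embedded circle rules out multi-sheeted coverings of $\widetilde{D}$, so $\widetilde{\Sigma}$ is in fact the embedded planar disc $\widetilde{D}$.
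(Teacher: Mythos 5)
Your proof is correct, but it takes a different route from the paper's for this particular theorem. The paper does not invert at all here: it sweeps the halfspace $z>0$ directly with the foliation by the $-4$-stationary spheres $\s^2_r(r)$, increasing $r$ from $0$ until a first sphere touches $\Sigma$ at an interior point, and then applies the tangency principle (Prop.~\ref{pr-pt}) to force $\Sigma\subset\s^2_{r_1}(r_1)$, which is absurd because $\Gamma\subset\{z=0\}$ while $\s^2_{r_1}(r_1)\cap\{z=0\}=\{0\}$. Your argument --- invert first, then slide affine planes parallel to $\Pi$ against the compact minimal surface $\Phi(\Sigma)$ --- is precisely the conjugate of that argument under $\Phi$, since the spheres $\s^2_r(r)$ are the inverses of the planes $z=1/(2r)$. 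What your version buys is that the sweep reduces to the classical convex hull property of compact minimal surfaces (only the ordinary maximum principle against planes is needed, rather than the weighted tangency principle against a family of spheres), and that it makes explicit the final topological step --- a connected compact surface immersed in a plane with embedded circular boundary is the embedded disc --- which the paper leaves implicit when it assumes without loss of generality that $\Sigma$ has points in $z>0$. What the paper's version buys is that it never leaves the original picture and uses no properties of inversions. Two minor imprecisions in your write-up, neither of which is a gap: affine planes $\Pi_t$ with $t\neq 0$ are \emph{not} $\alpha$-stationary for $\alpha\neq 0$ (Prop.~\ref{pr-ex}); what you actually use is that every affine plane is minimal, which is all that the comparison with the minimal surface $\Phi(\Sigma)$ requires. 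Also, the conclusion of Prop.~\ref{pr-pt} is local, so passing from coincidence near the touching point to $\widetilde{\Sigma}\subset\Pi_{t_1}$ requires the usual connectedness and open--closed argument, which the paper itself applies tacitly in its other proofs.
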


\begin{proof} Let $\Sigma$ be a compact $-4$-stationary surface spanning $\Gamma$. Suppose that $\Sigma$ is not $D$ and we arrive to a contradiction. After a vector isometry, we can assume that $\Gamma$ is contained in the plane $z=0$ and that $\Sigma$ contains points in the halfspace $z>0$. Since $\Sigma$ is compact and $0\not\in D$, let $r>0$ be sufficiently small so $\s^2_r(r)\cap\Sigma=\emptyset$. Using that the family of spheres $\{\s^2_r(r)\colon r>0\}$ gives a foliation of the halfspace $z>0$, and by letting $r\nearrow\infty$, let $r_1>r$ be the first sphere $\s^2_{r_1}(r_1)$ that touches $\Sigma$. This occurs at some interior point of both surfaces and the Tangency principle implies that $\Sigma$ is contained in is possible $\s^2_{r_1}(r_1)$. This is a contradiction because $\s^2_{r_1}(r_1)\cap\{z=0\}=\{0\}$.
\end{proof}

We point that the arguments in the above proof fail if $\alpha=-2$ because the spheres $\s^2(r)$ of the foliation of $\r^3-\{0\}$ are not tangent at $0$, such as it occurs for the spheres $\s^2_r(r)$.

We now come back to the initial question. We prove that the answer is affirmative regardless any assumption of the position of the given circle.

\begin{theorem} 
Planar discs of vector planes and spherical caps of spheres passing through the origin $0\in\r^3$ are the only compact $-4$-stationary surfaces with circular boundary.
\end{theorem}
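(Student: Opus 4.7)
The plan is to invoke the inversion duality from Cor.\ \ref{c1} and reduce the classification to a known rigidity statement for minimal surfaces bounded by a circle. Let $\Sigma$ be a compact $-4$-stationary surface with circular boundary $\Gamma$. Since $\Sigma\subset\r^3-\{0\}$ is compact, the origin lies at positive distance from $\Sigma$; in particular $0\notin\Gamma$, so $\Phi$ is a diffeomorphism on a neighbourhood of $\Sigma$ and $\Phi(\Gamma)$ is again a round circle in $\r^3$ (inversions preserve the class of circles not through $0$). By Cor.\ \ref{c1}, $\widetilde{\Sigma}:=\Phi(\Sigma)$ is then a compact minimal surface in $\r^3$ whose boundary is the circle $\Phi(\Gamma)$.

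I would then invoke the classical Nitsche-type rigidity: a compact minimal surface in $\r^3$ bounded by a single round circle must be a planar disc. This gives that $\widetilde{\Sigma}$ is the flat disc in some affine plane $P\subset\r^3$ bounded by $\Phi(\Gamma)$.

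To finish, apply $\Phi$ to $\widetilde{\Sigma}$ and split into two cases by the position of $P$. If $P$ is a vector plane, then $\Phi$ preserves $P$ setwise and sends the disc $\widetilde{\Sigma}\subset P$ to a bounded planar region of $P$ bounded by $\Phi^2(\Gamma)=\Gamma$; compactness of $\Sigma$ and the fact that $\Phi$ is a homeomorphism on $P\setminus\{0\}$ force this region to be the planar disc in the vector plane $P$ with boundary $\Gamma$. If $0\notin P$, then $\Phi(P)$ is a sphere $S$ passing through $0$, and $\Sigma=\Phi(\widetilde{\Sigma})$ is a topological disc on $S$ bounded by $\Gamma$; since $\Sigma$ is compact (hence does not contain $0$), it must be the compact spherical cap of $S$ determined by $\Gamma$. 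This exhausts the two families in the statement.

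The main obstacle is the middle step, namely the invocation of the Nitsche rigidity for arbitrary compact topology. In the disc case it is classical, so if one grants a priori that $\widetilde{\Sigma}$ (equivalently $\Sigma$) has the topology of a disc, the argument is complete; removing that hypothesis requires appealing to a stronger classification of compact minimal surfaces with connected circular boundary (ruling out positive genus or extra handles). Once that classification is granted, the rest is only bookkeeping about how $\Phi$ transports planes, spheres and discs.
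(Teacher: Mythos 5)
Your proposal follows the same route as the paper: invert via $\Phi$, classify the resulting compact minimal surface with circular boundary, and invert back; the final bookkeeping about how $\Phi$ transports planes and spheres also matches the paper's case split. The one place where you hesitate --- the classification of compact minimal surfaces bounded by a single round circle when the topology is not assumed to be that of a disc --- is not actually an obstacle, and Nitsche-type rigidity (which concerns minimal \emph{discs} spanned by Jordan curves of total curvature at most $4\pi$) is not the right tool. The fact the paper implicitly uses is the convex hull property: a compact minimal surface is contained in the convex hull of its boundary, by applying the maximum principle to the linear coordinate functions. Since the convex hull of a round circle is the flat closed disc it bounds, $\widetilde{\Sigma}=\Phi(\Sigma)$ is forced to lie in the plane of the circle $\Phi(\Gamma)$ and is therefore a compact planar domain with boundary $\Phi(\Gamma)$, hence the round disc, with no a priori assumption on genus or number of handles. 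With that substitution your argument is complete and coincides with the paper's.
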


\begin{proof} Let $\Sigma$ be a compact $-4$-stationary whose boundary is a circle $\Gamma$. We know that $0\not\in\Gamma$. Corollary \ref{c1} establishes that $\widetilde{\Sigma}:=\Phi(\Sigma)$ is a minimal surface whose boundary $\widetilde{\Gamma}$ is a circle because the inversion $\Phi$ carries circles that do not across $0$ in circles, such it happens with the circle $\Gamma$. Thus $\widetilde{\Sigma}$ is a compact minimal surface whose boundary is a circle. However the only compact minimal surfaces spanning planar curves are domains of planes. Then $\widetilde{\Sigma}$ is a planar disc with $0\not\in \widetilde{\Sigma}$. We have two possibilities. Suppose that $\widetilde{\Sigma}$ is contained in a vector plane. This situation occurs when $\Gamma$ is contained in a vector plane. Then $\Sigma$ is a planar disc. In case that $\widetilde{\Sigma}$ is not contained in a vector plane, then its inversion via $\Phi$ is part of a sphere. Since the only $-4$-stationary surfaces that are spheres are that of type $\s^2_r(r)$ (Prop. \ref{pr-ex}), this proves that $\Sigma$ is a spherical cap of a sphere passing through the origin $0\in\r^3$. This completes the proof. 
\end{proof}

It deserves the following observations.   For $\alpha$-stationary surfaces, one may think that a type of Alexandrov reflection technique can be employed to prove that $\Sigma$ is a surface of revolution \cite{al}, even in the case that $\alpha=-4$ or $\alpha=-2$. However, the proof, if exists, is not immediate. In arguments involving the reflection technique by planes, it is necessary to assume that $\Sigma$ is embedded which it is not the case of Thm. \ref{t2}. 
 Even assuming embeddedness, reflections in $\r^3$ about planes cannot be employed because these symmetries are not vector isometries in general and, consequently, they do not preserve solutions of Eq. \eqref{eq1}.

\section{Further two applications of the duality}\label{s5}

Using Cor. \ref{c1}, we classify all ruled $-4$-stationary surfaces. Recall that inversions preserve, as set, the family of curves formed by circles and straight-lines. For the proof we use that the only minimal surfaces of $\r^3$ constructed by a one-parameter family of circles are planes, catenoids and the Riemann minimal examples \cite{en1,en2,ri}.

\begin{theorem} Vector planes are the only ruled $-4$-stationary surfaces.
\end{theorem}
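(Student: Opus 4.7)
My approach is to transfer the problem to minimal surfaces via Corollary \ref{c1}, and then invoke the Enneper--Riemann classification of minimal surfaces foliated by circles.

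Let $\Sigma$ be a ruled $-4$-stationary surface with a one-parameter family of rulings $\{L_t\}_{t\in I}$. I would first observe that $\widetilde{\Sigma}:=\Phi(\Sigma)$ is a minimal surface by Corollary \ref{c1}, and that the inversion $\Phi$ sends every straight line not through $0$ to a circle passing through $0$ and every line through $0$ to itself. Hence $\{\widetilde{L}_t\}:=\{\Phi(L_t)\}$ is a one-parameter family of circles and/or lines contained in $\widetilde{\Sigma}$, and \emph{every} leaf passes through the origin.

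If every ruling $L_t$ passes through $0$, then $\Sigma$ is (a piece of) a cone with vertex at $0$, and $\Phi$ maps this cone to itself setwise. Since the only minimal cone in $\r^3$ with vertex $0$ is a plane through $0$, $\Sigma$ is a vector plane and we are done. Otherwise, some $\widetilde{L}_t$ is a genuine circle, and by continuity $\widetilde{\Sigma}$ is a smooth minimal surface foliated by a one-parameter family of circles. By the classical theorem of Enneper and Riemann cited above, $\widetilde{\Sigma}$ is then contained in a plane, a catenoid, or a Riemann minimal example.

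The decisive step is to exclude the catenoid and the Riemann examples. In both cases the circles of the defining foliation lie in a family of \emph{parallel} planes; only one plane of such a parallel family can contain the origin, so at most one leaf of the foliation can pass through $0$. This contradicts the fact that every $\widetilde{L}_t$ passes through $0$. Therefore $\widetilde{\Sigma}$ is contained in a plane $\widetilde{\Pi}$, and the requirement that its leaves all pass through $0$ forces $0\in\widetilde{\Pi}$, i.e., $\widetilde{\Pi}$ is a vector plane. Since $\Phi$ fixes vector planes setwise, $\Sigma$ itself is an open subset of a vector plane. I expect the main obstacle to be phrasing the ``circles-in-parallel-planes'' feature cleanly and invoking it from the literature, since it is not literally the statement of the classification but a structural property of the surfaces on its list; once this is in place, the rest of the argument is essentially bookkeeping about how $\Phi$ acts on lines and planes through the origin.
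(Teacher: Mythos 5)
Your proposal is correct and follows essentially the same route as the paper: invert via $\Phi$, observe that the image is a minimal surface foliated by circles all passing through $0$, invoke the Enneper--Riemann classification, and rule out the catenoid and Riemann examples because their circles cannot share a common point (you justify this via the parallel-planes property, the paper by the same observation). The only cosmetic difference is in the conical case, where you cite the classification of minimal cones in $\r^3$ while the paper uses Catalan's theorem that the only ruled minimal surfaces are the plane and the helicoid; both are standard and both close that case.
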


\begin{proof} It is clear that vector planes are ruled surfaces and also are $-4$-stationary surfaces. Suppose now that $\Sigma$ is a non-planar ruled $-4$-stationary surface and we will arrive to a contradiction. A particular case to consider is that $\Sigma$ is a (non-planar) conical surface from $0$. In such a case, the inverse surface $\widetilde{\Sigma}: =\Phi(\Sigma)$ coincides with $\Sigma$. However, conical surfaces are ruled surfaces and the only ruled minimal surfaces are the plane (which was discarded) and the helicoid, which it is not a conical surface.  

Let now $\Sigma$ be a ruled surface   containing straight-lines that do not across $0$. In particular, in an open set of $\Sigma$ the surface is not a cone from $0$. Since the map $\Phi$ carries the straight-lines of $\Sigma$ in circles of $\widetilde{\Sigma}$, we deduce that $\widetilde{\Sigma}$ is a minimal surface and foliated by circles. Moreover, all these circles must contain the origin $0$ because their preimages vi $\Phi$ are straight-lines that do not contain $0$. In such a case it is known that $\widetilde{\Sigma}$ is a plane, a catenoid or one example of the Riemann minimal examples. The plane is not possible because $\Sigma$ is not a plane neither a sphere (a sphere is not a ruled surface). The case that $\widetilde{\Sigma}$ is a catenoid or a Riemann minimal example is neither possible because the circles of these surfaces do not across a common point.  
\end{proof}

 Suppose $\Sigma$ is a minimal surface foliated by circles, that is, $\Sigma$ is a plane, a catenoid or a Riemann minimal example. We know that $\Phi$ carries circles into circles or straight-lines. In case that $\Sigma$ is a plane, then its dual surface $\widetilde{\Sigma}$ a sphere containing $0$ or a vector plane. Both surfaces are, of course, $-4$-stationary surfaces. 

In case that $\Sigma$ is a catenoid or a Riemann minimal example, its inverse surface is foliated by circles because the circles of the two minimal surfaces do not across the origin of $\r^3$ (except perhaps one circle). Suppose that $\Sigma$ is a catenoid. Taking into account Rem. \ref{r1}, each position in the space of $\Sigma$ provides different $-4$-stationary surfaces. In case that the rotation axis of $\Sigma$ acrosses $0$, say the $z$-axis, then $\Phi$ transforms $\Sigma$ into a surface of revolution with the same axis; otherwise, the surface $\widetilde{\Sigma}$ is not rotational, although it is formed by a one-parameter family of circles, with possible straight-lines. 

Since the catenoid goes to infinity, its dual $-4$-stationary surface contains the origin, which it is according to the theory of $-4$-stationary surfaces. In Fig. \ref{fig1} we show two $-4$-stationary surfaces which are dual surfaces of catenoids whose axes are parallel to the $z$-axis. In the left surface, the dual surface is a catenoid whose axis is the $z$-axis and, in consequence, the inversion of this surface is a surface of revolution begin the $z$-axis the rotation axis. In the right surface, the axis of the catenoid is the vertical line through the point $(-\frac12,0,0)$ and thus the dual surface is not rotational. 
 
 In Fig. \ref{fig2} we show the inverse surface of a Riemann minimal example. 
 \begin{figure}[hbtp]
\begin{center}
\includegraphics[width=.5\textwidth]{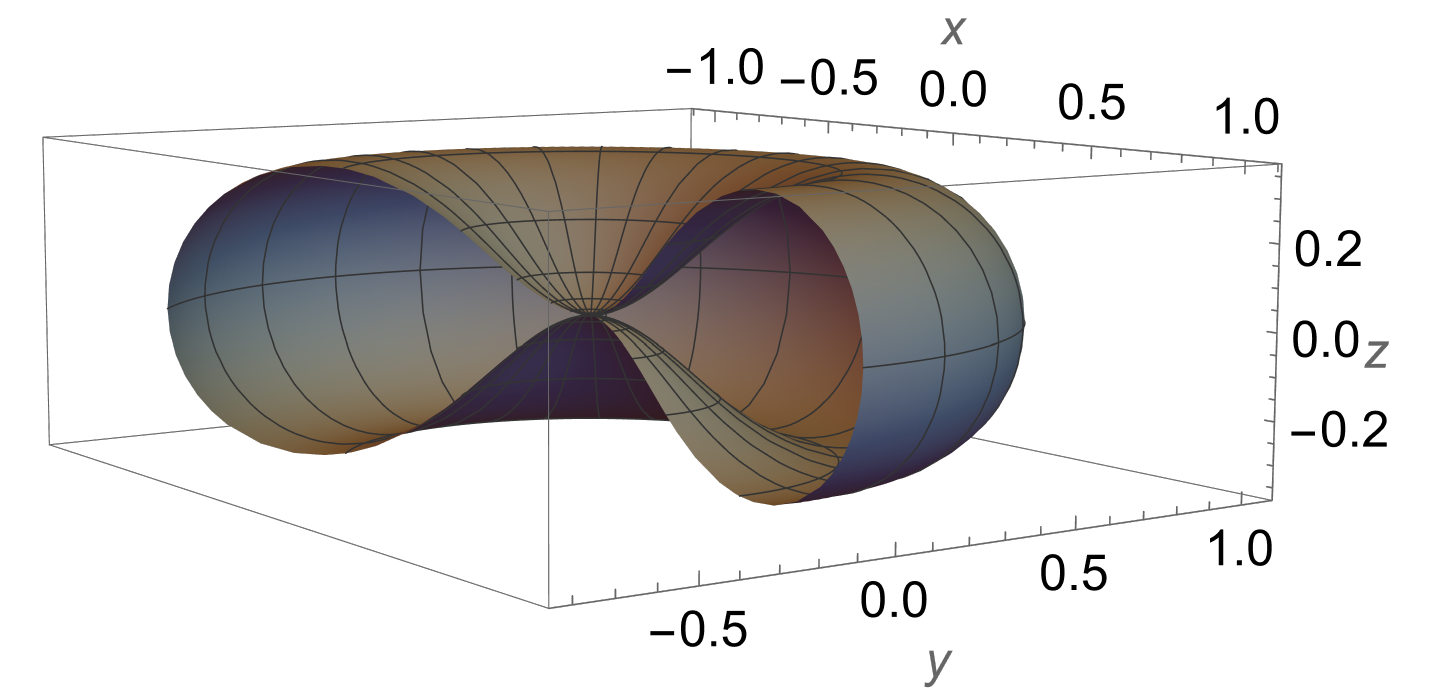}, \includegraphics[width=.4\textwidth]{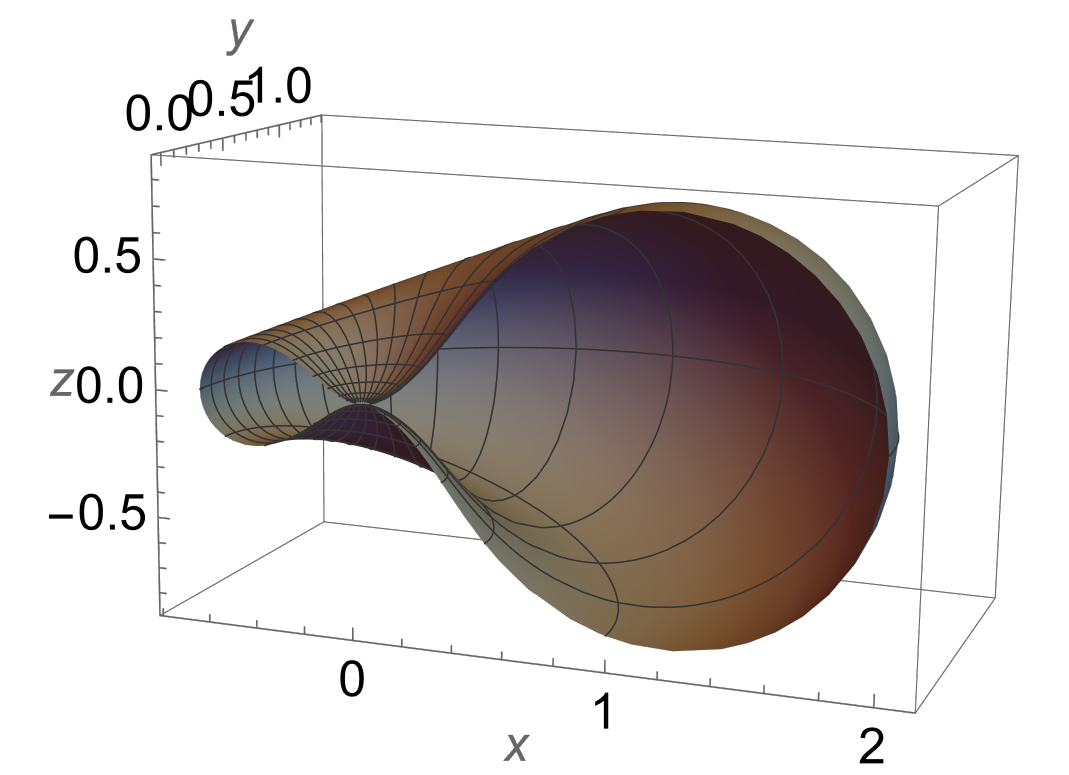}
\end{center}
\caption{Cross-sections of inverse catenoids, where the axis of the catenoid acrosses the origin (left) and does not (right).}\label{fig1}
\end{figure}
 
 \begin{figure}[hbtp]
\begin{center}
\includegraphics[width=.4\textwidth]{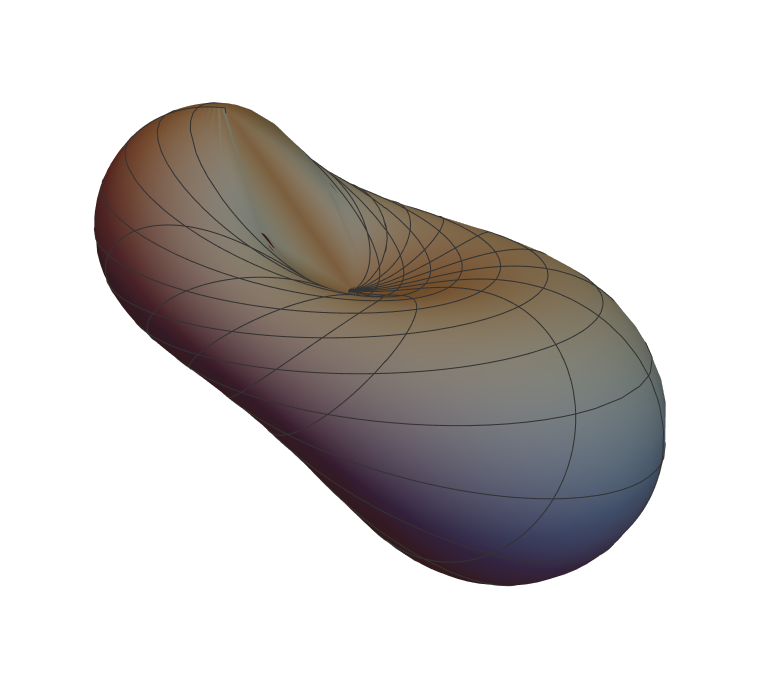}, \includegraphics[width=.4\textwidth]{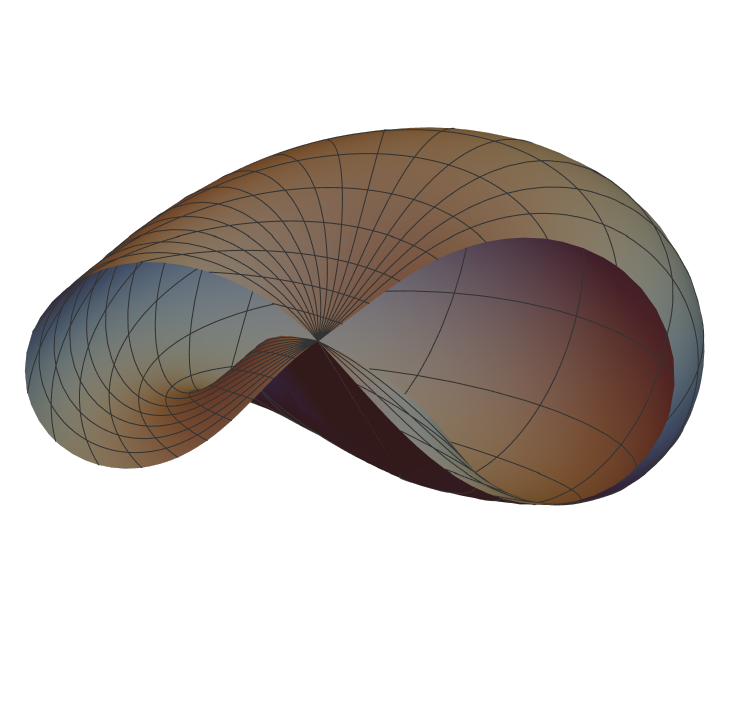}
\end{center}
\caption{Inverse of a Riemann minimal example and a cross-section of that.}\label{fig2}
\end{figure}

Another application of Cor. \ref{c1} is a characterization of the spheres $\s^2_r(r)$ in the family of $-4$-stationary surfaces. We observe  that all these spheres are contained in a vector halfspace of $\r^3$. We prove that this property characterizes these spheres.

\begin{theorem} Spheres passing through the origin $0\in\r^3$ are the only $-4$-stationary surfaces properly immersed in $\r^3$ and contained in a vector halfspace. 
\end{theorem}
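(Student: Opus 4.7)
The plan is to use Corollary \ref{c1} to transfer the problem to minimal surfaces and then apply the classical halfspace theorem. After applying a vector isometry---which preserves $-4$-stationarity---I may assume the vector halfspace is $\{z>0\}$, so $\Sigma\subset\{z>0\}$. Set $\widetilde\Sigma:=\Phi(\Sigma)$; by Corollary \ref{c1} this is a minimal surface. The key observation is that $\Phi$ preserves vector halfspaces: the third coordinate of $\Phi(p)=p/|p|^2$ equals $z(p)/|p|^2$, which has the same sign as $z(p)$, so $\widetilde\Sigma\subset\{z>0\}$ as well.

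I would then invoke the Hoffman--Meeks halfspace theorem: a connected, properly immersed, non-planar minimal surface in $\r^3$ cannot be contained in a halfspace. Thus $\widetilde\Sigma$ must be a plane, and because it is contained in $\{z>0\}$ it can only be a horizontal plane $\{z=c_0\}$ with $c_0>0$ (any other plane would contain points with $z\le 0$). A direct computation of the type already carried out in Sect.~\ref{s3} shows $\Phi(\{z=c_0\})=\s^2_{1/(2c_0)}(1/(2c_0))$, a sphere tangent to $\{z=0\}$ at $0$ and thus passing through the origin, which is the desired conclusion.

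The main obstacle is verifying that $\widetilde\Sigma$ is properly immersed in all of $\r^3$---not merely in $\r^3\setminus\{0\}$---so that Hoffman--Meeks applies as stated. This is immediate when $\Sigma$ is bounded in $\r^3$, the case of the model spheres $\s^2_r(r)\setminus\{0\}$ (which accumulate only at the origin): then $\widetilde\Sigma$ is bounded away from $0$ and closed in $\r^3$. When $\Sigma$ reaches infinity and $\widetilde\Sigma$ correspondingly accumulates at $0$, I would treat the origin separately: either via a removable-singularity argument for the bounded minimal surface $\widetilde\Sigma$, or, more elementarily, via a direct tangency-principle argument with horizontal planes. For the latter, the positive harmonic function $z$ on $\widetilde\Sigma$ has supremum $c_0$; a maximising sequence $\widetilde p_n$ with $z(\widetilde p_n)\to c_0$ must sub-converge to a point of $\overline{\widetilde\Sigma}$, and the only candidate outside $\widetilde\Sigma$ admitted by properness in $\r^3\setminus\{0\}$ is the origin, where $z=0\neq c_0$. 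Hence the supremum is attained at an interior point, Prop.~\ref{pr-pt} for $\alpha=0$ forces $\widetilde\Sigma\subset\{z=c_0\}$, and connectedness together with $\widetilde\Sigma$ being open and closed in the plane $\{z=c_0\}$ (which avoids $0$) yields $\widetilde\Sigma=\{z=c_0\}$.
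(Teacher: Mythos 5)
Your proof follows essentially the same route as the paper: apply $\Phi$, observe that the image stays in the vector halfspace $\{z>0\}$, invoke the Hoffman--Meeks strong halfspace theorem to conclude the image is a horizontal plane $\{z=c_0\}$ with $c_0>0$, and invert back to obtain a sphere through the origin. The extra care you take over properness at the origin is warranted rather than pedantic: the paper simply asserts that inversions preserve proper immersions, whereas $\Phi(\Sigma)$ is a priori proper only in $\r^3\setminus\{0\}$ and can accumulate at $0$ when $\Sigma$ is unbounded, so your removable-singularity or maximum-principle patch closes a step the paper leaves implicit.
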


\begin{proof} Let $\Sigma$ be a $-4$-stationary surface properly immersed in $\r^3$. After a vector isometry of $\r^3$, we can suppose that $\Sigma$ is contained in the vector halfspace $z>0$. By Cor. \ref{c1}, the surface $\Phi(\Sigma)$ is a minimal surface which it is also proper because inversions preserve proper immersions. Since $\Phi(\Sigma)$ lies contained in the halfspace $z>0$, then $\Phi(\Sigma)$ is a plane by the strong halfspace theorem for minimal surfacest of Hoffman and Meeks \cite{hm}. Since this plane is contained in $z>0$, the equation of this plane is $z=\delta$ for some $\delta>0$. Thus its inversion via $\Phi$, that is, the initial surface $\Sigma$, is a sphere of type $\s_r^2(r)$, proving the result. 
\end{proof}

\section{The Bj\"{o}rling problem and examples of non-orientable $-4$-stationary surfaces}\label{s6}

A method of construction of minimal surfaces is the Bj\"{o}rling problem which can be formulated as follows. Let $\alpha\colon I\subset\r\to\r^3$ be a regular   curve and let $V\colon I\to\r^3$ be a unit   vector field along $\alpha$ such that $\langle\alpha'(s),V(s)\rangle=0$ for all $s\in I$. The Bj\"{o}rling problem consists into find a domain $\Omega\subset\r^2$ containing $I$ and a minimal surface $X\colon\Omega \to\r^3$, $X=X(s,t)$, such that $X(s,0)=\alpha(s)$ and $\nu\circ X(s,0)=V(s)$ for all $s\in I$. The answer to this problem is affirmative if the data are analytic, even there is explicit parametrization of the minimal surface \cite{bj,sc}. Since $\Phi$ preserves analytic functions, we prove that the analogue Bj\"{o}rling problem in the family of $-4$-stationary surfaces has a solution.

\begin{theorem} Let $\alpha\colon I\subset\r\to\r^3$ be a regular analytic curve and let $V\colon I\to\r^3$ be a unit analytic vector field along $\alpha$ such that $\langle \alpha'(s),V(s)\rangle=0$ for all $s\in I$.  If $0\not\in\alpha(I)$, then there is a $-4$-stationary surface $\Sigma$ containing $\alpha$ and such that the unit normal of $\Sigma$ along $\alpha$ coincides with $V$.
\end{theorem}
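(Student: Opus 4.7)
The plan is to reduce the problem to the classical Björling problem for minimal surfaces through the duality provided by Corollary \ref{c1}. The inversion $\Phi$ is a real-analytic diffeomorphism of $\r^3-\{0\}$ which, by Corollary \ref{c1}, exchanges the classes of minimal and $-4$-stationary surfaces, so it is natural to transport the initial data $(\alpha,V)$ via $\Phi$, solve the classical problem on the minimal side, and push the solution back.

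Concretely, first I would define $\widetilde\alpha:=\Phi\circ\alpha$, which is a regular analytic curve since $\Phi$ is bianalytic away from $0$ and $0\notin\alpha(I)$. The next step is to produce the correct unit normal $\widetilde V$ along $\widetilde\alpha$, and the normal-transformation formula \eqref{nn} suggests setting
$$\widetilde V(s):=V(s)-2\langle V(s),\alpha(s)\rangle\frac{\alpha(s)}{|\alpha(s)|^2}.$$
This is precisely the reflection of $V(s)$ through the hyperplane perpendicular to $\alpha(s)$, so it is automatically of unit length; and because $d\Phi_{\alpha(s)}$ acts on tangent vectors as that same reflection scaled by $1/|\alpha(s)|^2$, the relation $\langle\alpha',V\rangle=0$ is preserved: $\langle\widetilde\alpha'(s),\widetilde V(s)\rangle=0$. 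Analyticity of the transported data is automatic from that of $\Phi$, $\alpha$ and $V$.

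With the transported data in hand, I would invoke the classical Björling theorem to obtain an analytic minimal immersion $\widetilde X\colon\Omega\to\r^3-\{0\}$ on some neighborhood $\Omega\subset\r^2$ of $I\times\{0\}$ with $\widetilde X(s,0)=\widetilde\alpha(s)$ and prescribed normal $\widetilde V$ along the core curve (shrinking $\Omega$ if necessary so that the image avoids $0$, which is possible since $\widetilde\alpha$ is bounded away from the origin on compact pieces of $I$). Setting $\Sigma:=\Phi(\widetilde X(\Omega))$, Corollary \ref{c1} yields that $\Sigma$ is $-4$-stationary; the involution property $\Phi\circ\Phi=\mathrm{Id}$ gives $\alpha\subset\Sigma$; and applying \eqref{nn} a second time inverts the reflection to recover exactly $V$ as the unit normal of $\Sigma$ along $\alpha$.

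The argument is essentially bookkeeping through Corollary \ref{c1}, so no new analytic machinery is required. The only mildly technical step is the shrinking of $\Omega$ to avoid the origin, and this is a direct continuity argument. I expect the real payoff of the statement to lie not in the proof but in its applications to non-orientable $-4$-stationary examples, obtained by choosing a closed curve $\alpha$ along which $V$ returns with reversed orientation after one loop, so that the classical minimal Möbius strip produced by Björling maps, under $\Phi$, to a $-4$-stationary Möbius strip.
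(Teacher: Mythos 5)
Your proposal is correct and follows essentially the same route as the paper: transport the data by $\Phi$ using exactly the reflected normal $\widetilde V(s)=V(s)-2\langle V(s),\alpha(s)\rangle\alpha(s)/|\alpha(s)|^2$, solve the classical Bj\"{o}rling problem for $\{\widetilde\alpha,\widetilde V\}$, and pull back via $\Phi$ using Corollary \ref{c1} and the formulas \eqref{nn}. Your extra remark about shrinking $\Omega$ so the minimal solution avoids the origin is a small point of care that the paper leaves implicit, but it does not change the argument.
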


 \begin{proof}
 Define $\widetilde{\alpha}(s)=\Phi(\alpha(s))$ and 
 $$\widetilde{V}(s)=V(s)-2\frac{\langle\alpha(s),V(s)\rangle}{|\alpha(s)|^2}\alpha(s).$$
 It is immediate that $|\widetilde{V}(s)|=1$ and that $\widetilde{V}(s)$ is orthogonal to $\widetilde{\alpha}'(s)$ for all $s\in I$. Let $\widetilde{X}\colon\Omega\to\r^3$, $\widetilde{X}=\widetilde{X}(s,t)$, be the  minimal surface which it is  the Bj\"{o}rling problem for data $\{\widetilde{\alpha},\widetilde{V}\}$. By Cor. \ref{c1}, $X:=\Phi\circ\widetilde{X}\colon\Omega\to\r^3$ is a $-4$-stationary surface. Finally, we check the desired conditions. So, $X(s,0)=\Phi(\widetilde{\alpha}(s))=\Phi^2(\alpha(s))=\alpha(s)$. If we denote by $N$ the unit normal of $\widetilde{X}$, and using \eqref{nn}, we have 
\begin{equation*}
\begin{split}
\nu(X(s,0))&= N(\widetilde{X}(s,0))-2\frac{\langle \widetilde{X}(s,0),N(s,0)\rangle}{|\widetilde{X}(s,0|^2}\widetilde{X}(s,0)\\
 &=\widetilde{V}(s)-2\frac{\langle\beta(s),\widetilde{V}(s)\rangle}{|\beta(s)|^2}\beta(s)=V(s).
 \end{split}
 \end{equation*} 
 \end{proof}

Corollary \ref{c1} allows to give examples of non-orientable $-4$-stationary surfaces because inversions preserve orientability. For this, it is enough to consider the inversion of any non-orientable minimal surface. However, we give an explicit example motivated by a result of Meeks who used the solutions of the Bj\"{o}rling problem to construct minimal surfaces whose topology is that of a M\"{o}bius strip \cite{me}. 
\begin{corollary} There exist $-4$-stationary surfaces whose topology is that of a a M\"{o}bius strip.
\end{corollary}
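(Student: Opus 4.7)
The plan is to transport Meeks's non-orientable minimal surfaces into the $-4$-stationary world via Cor.~\ref{c1}. Meeks \cite{me} produces a minimal immersion whose image is a M\"{o}bius band by choosing an analytic closed core curve $\alpha\colon\s^1\to\r^3$ and a unit analytic normal field $V$ along $\alpha$, orthogonal to $\alpha'$, that returns to $-V$ after one loop, and then solving the Bj\"{o}rling problem. The output is a minimal surface $\widetilde{\Sigma}$ whose topology is that of a M\"{o}bius strip.

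First I would arrange that $0\not\in\widetilde{\Sigma}$. Since rigid motions preserve minimality and the Meeks surface has compact core, a generic translation places a relatively compact sub-band of it away from the origin. Then I would set $\Sigma=\Phi(\widetilde{\Sigma})$. By Cor.~\ref{c1}, $\Sigma$ is a $-4$-stationary surface, and since the restriction of $\Phi$ to $\widetilde{\Sigma}$ is a diffeomorphism of $\r^3-\{0\}$ onto itself, $\Sigma$ inherits both the M\"{o}bius topology and the non-orientability of $\widetilde{\Sigma}$. Explicitly, the formula $\widetilde{\nu}=\nu-2hp/|p|^2$ from \eqref{nn} converts a continuous global unit normal on either surface into one on the other, so absence of such a field on $\widetilde{\Sigma}$ forces absence on $\Sigma$.

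A more intrinsic variant bypasses Meeks altogether and runs the construction directly in the $-4$-stationary category using the Bj\"{o}rling-type existence theorem just proved: feed the analytic data $(\alpha,V)$, with $0\not\in\alpha(\s^1)$ and $V$ reversing sign after one loop, into that theorem, and read off $\Sigma$ as its output. The only delicate point---that both routes produce a genuine M\"{o}bius strip rather than its orientable double cover---reduces in each case to the sign-reversal of $V$, and is not a serious obstacle.
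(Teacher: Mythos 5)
Your proposal is correct and follows essentially the same route as the paper: the paper also produces a M\"{o}bius minimal surface by solving the Bj\"{o}rling problem for a circle with a half-twisting unit normal field (the explicit Meeks data $\alpha(s)=(\cos s,\sin s,0)$, $V(s)=\cos(s/2)\alpha(s)+\sin(s/2)(0,0,1)$, whose core curve already avoids the origin, so no translation is needed) and then applies $\Phi$ together with Cor.~\ref{c1} and the preservation of orientability under inversion. Your second, ``intrinsic'' variant via the $-4$-stationary Bj\"{o}rling theorem is an equivalent repackaging of the same argument.
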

\begin{proof}
Let $\s^1$ be the circle of radius $1$ and center $0$ situated in the plane $z=0$. If $\alpha(s)=(\cos s,\sin s,0)$ is a parametrization of $\s^1$, let $V(s)=\cos(s/2)\alpha(s)+\sin(s/2)(0,0,1)$. As we go from $s=0$ to $s=2\pi$, the vector field $V$ comes back to the initial position but in reverse position. The corresponding Bj\"{o}rling solution is a M\"{o}bius minimal surface. Therefore, its inverse surface is a $-4$-stationary surface which is not orientable. See Fig. \ref{fig3} 
\end{proof}

 \begin{figure}[hbtp]
\begin{center}
\includegraphics[width=.45\textwidth]{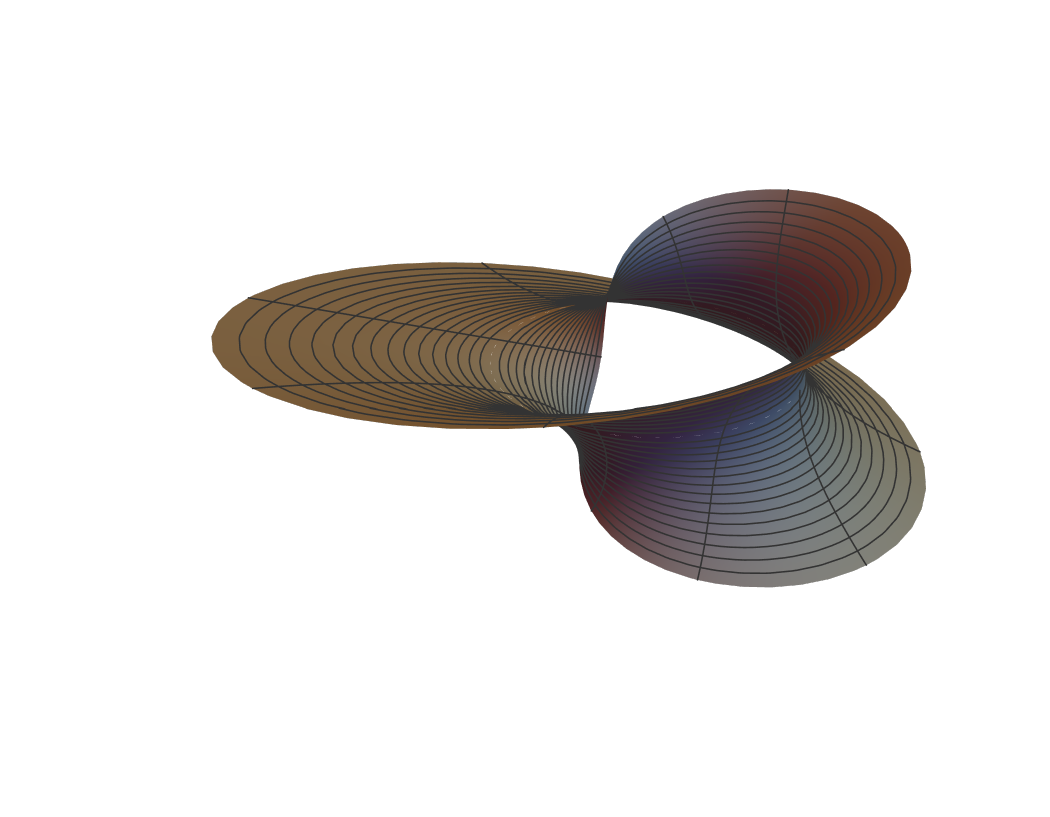}, \includegraphics[width=.3\textwidth]{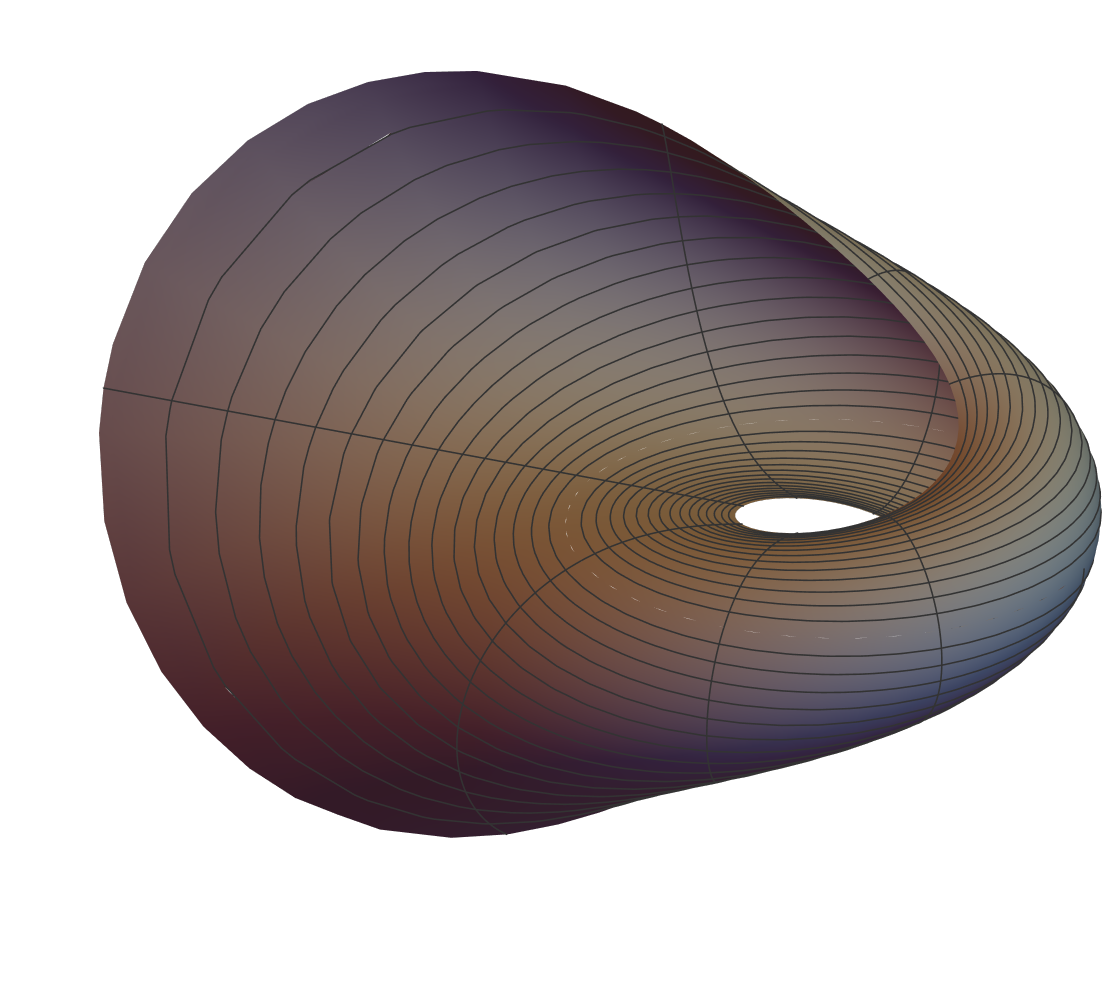}
\end{center}
\caption{M\"{o}bius strips. Left: a minimal surface. Right: a $-4$-stationary surface as inversion of a minimal surface}\label{fig3}
\end{figure}

\section*{Statements and Declarations}
The authors have no conflicts of interest to declare that are relevant to the content of this article.

\section*{Acknowledgements}
The author has been partially supported by MINECO/MICINN/FEDER grant no. PID2023-150727NB-I00, and by the ``Mar\'{\i}a de Maeztu'' Excellence Unit IMAG, reference CEX2020-001105- M, funded by MCINN/AEI/10.13039/ 501100011033/ CEX2020-001105-M.

\end{document}